
\documentclass{preprint-raic}
\usepackage{enumerate}
\usepackage{mathrsfs}
\usepackage{float}

\AUTHORS{
  Martin~Rai\v{c}%
    \footnote{%
      University of Ljubljana, Faculty of Mathematics and Physics, Jadranska 19, SI--1000 Ljubljana, Slovenia, and
      Institute of Mathematics, Physics and Mechanics, Jadranska 19, SI--1000 Ljubljana, Slovenia.
      \EMAIL{martin.raic@fmf.uni-lj.si}%
   }
}

\TITLE{%
  A COMPLETION OF COUNTEREXAMPLES TO THE CLASSICAL CENTRAL LIMIT THEOREM FOR PAIR- AND
  TRIPLEWISE INDEPENDENT AND IDENTICALLY DISTRIBUTED RANDOM VARIABLES
} 

\SHORTTITLE{Counterexample to CLT for triplewise independent r.v.'s}


\KEYWORDS{%
  central limit theorem;
  mutual independence;
  pairwise independence;
  triplewise independence
} 

\AMSSUBJ{60F05} 
\AMSSUBJSECONDARY{62E20} 

\ABSTRACT{%
  By the Lindeberg--Lévy central limit theorem, standardized partial sums of a sequence of
  mutually independent and identically distributed random variables converge in law to
  the standard normal distribution. It is known that mutual independence cannot be relaxed
  to pairwise and even not triplewise independence. Counterexamples have been constructed
  for most marginal distributions: a recent construction works under a condition which
  excludes certain probability distributions with atomic parts, in particular discrete
  distributions in the ``general position.'' In the present paper, we show that this
  condition can be lifted: for any probability distribution $ F $ on the real line,
  which has finite variance and is not concentrated in a single point, there exists a
  sequence of triplewise independent random variables with distribution $ F $, such that
  its standardized partial sums converge in law to a distribution which is not normal.
  There is also scope for extension to $ k $-tuplewise independence.
}


\expandafter\def\csname citep\endcsname{\cite}

\newcommand{\RR}{\mathbb{R}}

\newcommand{\sth}{\mathrel{;}}

\DeclareMathOperator{\Pp}{\mathbb{P}}
\DeclareMathOperator{\Ee}{\mathbb{E}}
\DeclareMathOperator{\Var}{Var}

\DeclareMathOperator{\One}{\mathds 1}

%

\begin{document}

\section{Introduction}
\label{sc:Intr}

For a sequence of mutually independent and identically distributed random variables
$ X_1, X_2, \ldots $ with $ \Ee X_1 = \mu $ and $ \Var(X_1) = \sigma^2 $, where
$ 0 < \sigma < \infty $, it is known that the standardized partial sums
\begin{equation}
\label{eq:Sn}
 S_n := \frac{1}{\sigma \sqrt n} \biggl( \sum_{k=1}^n X_k - n \mu \biggr)
\end{equation}
converge in law to the standard normal distribution: this is known as the
Lindeberg--Lévy central limit theorem \cite{Lind0,Levy}.
It is also known that mutual independence can in general not be relaxed to the weaker
pairwise independence, nor can it even be relaxed to triplewise independence.
In general, $ K $-tuplewise independence is defined as follows:

\begin{definition}
Let $ K \in \{ 2, 3, 4, \ldots \} $. An indexed family of random variables
$ X_i $, $ i \in I $, is \emph{$ K $-tuplewise independent} if the random
variables $ X_{i_1}, X_{i_2}, \ldots, X_{i_K} $ are mutually independent
for any $ K $-tuple of distinct indices $ i_1, i_2, \ldots, i_K $.
\end{definition}

Various counterexamples have been constructed. Avanzi et al.\ \cite{CE2}
provide a survey of earlier constructions and construct a family of counterexamples
for the pairwise independence. Boglioni Beaulieu et al.\ \cite{CE3} modify the latter
construction to one which is based on a suitable sequence of graphs, each
graph giving a family of $ K $-tuplewise independent and identically distributed
random variables. The random variables obtained from all graphs can be arranged
into an array, each graph giving one row. They provide an increasing sequence of
graphs giving triplewise independent rows and standardized row sums converging in
law to a variance-gamma distribution, which is not normal: see Subsection~4.1 ibidem.
From that array, a sequence can be extracted, such that its standardized partial
sums do not converge to a normal distribution because it has a subsequence which
converges to the variance-gamma distribution. Under some additional conditions,
it can be shown that the entire sequence converges to the same distribution:
see Lemma~\ref{lm:FillGaps}.

Both above-mentioned constructions allow for a broad choice for the (common)
distribution of the summands $ X_1, X_2, \ldots $\spacefactor=3000{}
Indeed, taking $ W $ to be a generic random variable with this distribution, both
constructions work under the following condition quoted below:

\begin{condition}
\label{cn:l-1}
There exists a Borel set $ A \subseteq \RR $, such that:
\begin{list}{\textlegacybullet}{\topsep 1ex \parsep 0ex \itemsep 1ex}
\item $ \Pp(W \in A) = \ell^{-1} $ for some integer $ \ell \ge 2 $;
\item $ \Ee(W \mid W \in A) \ne \Ee(W \mid W \notin A) $.
\end{list}
\end{condition}

Although this restriction is relatively mild, not all probability
distributions on the real line fit it. In particular, discrete
distributions in the ``general position'' are excluded, concretely
any discrete distribution which is non-trivial and with point
probabilities summing up only to $ 0 $, $ 1 $ or an irrational number;
compare Remark~2 in \cite{CE2}.

However, we show that Condition~\ref{cn:l-1} can be lifted: the constructions
provided by \cite{CE2,CE3} can be adapted so that they allow for any distribution
on the real line which makes sense. Indeed, instead of Condition~\ref{cn:l-1},
we only need that the distribution of $ W $ can be represented as a suitable
mixture of two distributions. The following assertion states that this is true
for all distributions which make sense (and we only need the case
$ \tau = \ell^{-1} $). We defer the proof to Section~\ref{sc:Mix}.

\begin{proposition}
\label{pr:MixDiffExp}
For each $ \tau \in (0, 1) $ and any real-valued random variable $ W $ with
finite expectation, which is not almost surely constant, there exist
real-valued random variables $ U $ and $ V $ with different expectations,
such that
\begin{equation}
\label{eq:MixDiffExp}
 \Pp(W \in C) = (1 - \tau) \Pp(U \in C) + \tau \Pp(V \in C)
\end{equation}
for all Borel sets $ C \subseteq \RR $. Moreover, if $ W $ has finite variance,
$ U $ and $ V $ can be chosen to have finite variances, too.
\end{proposition}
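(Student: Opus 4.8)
The plan is to realize the identity~\eqref{eq:MixDiffExp} as a splitting of the law $\mu$ of $W$ into two sub-probability measures of total masses $1-\tau$ and $\tau$ whose normalizations — serving as the laws of $U$ and $V$ — are supported on opposite sides of a point of $\RR$, and therefore automatically have different barycenters unless $W$ is degenerate.

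First I would fix a quantile of level $1-\tau$, i.e.\ a point $t \in \RR$ with $\Pp(W < t) \le 1-\tau \le \Pp(W \le t)$, which exists by monotonicity and right-continuity of the distribution function. Writing $p := \Pp(W = t)$ and $\alpha := (1-\tau) - \Pp(W < t) \in [0,p]$, I would peel off the possible atom at $t$ by setting, for Borel sets $C \subseteq \RR$,
\begin{align*}
 \mu_1(C) &:= \Pp(W \in C,\ W < t) + \frac{\alpha}{p}\,\Pp(W \in C,\ W = t), \\
 \mu_2(C) &:= \Pp(W \in C,\ W > t) + \Bigl(1 - \frac{\alpha}{p}\Bigr)\,\Pp(W \in C,\ W = t),
\end{align*}
with the convention that the fractional terms are dropped when $p = 0$. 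By construction $\mu_1 + \mu_2 = \mu$, $\mu_1(\RR) = 1-\tau$, $\mu_2(\RR) = \tau$, while $\mu_1$ is carried by $(-\infty,t]$ and $\mu_2$ by $[t,\infty)$. I would then take $U$ and $V$ to be random variables with laws $(1-\tau)^{-1}\mu_1$ and $\tau^{-1}\mu_2$; substituting these back gives~\eqref{eq:MixDiffExp} directly.

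It then remains to verify the moment claims. Since $\mu_1 \le \mu$ and $\mu_2 \le \mu$ as measures, $\int |x|\,\mu_i(\dl x) \le \int |x|\,\mu(\dl x) < \infty$, so $U$ and $V$ have finite expectations, and likewise $\int x^2\,\mu_i(\dl x) \le \int x^2\,\mu(\dl x)$, which yields finite variances whenever $W$ has one. Because $U$ is supported in $(-\infty,t]$, we have $\Ee U \le t$, with equality only if the law of $U$ equals $\delta_t$; symmetrically $\Ee V \ge t$, with equality only if the law of $V$ equals $\delta_t$. Hence $\Ee U = \Ee V$ would force $\mu_1 = (1-\tau)\delta_t$ and $\mu_2 = \tau\delta_t$, so $\mu = \delta_t$ and $W$ is almost surely constant, contrary to assumption; therefore $\Ee U \ne \Ee V$.

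The only genuinely delicate point is the bookkeeping at the quantile: an honest Borel set of probability exactly $1-\tau$ need not exist when the distribution function has a jump straddling the level $1-\tau$, which is exactly what forces the fractional splitting of the atom $\{t\}$. (Equivalently, one may run the whole argument through the generalized inverse $Q$ of the distribution function and a single uniform random variable $L$, setting $U := Q\bigl((1-\tau)L\bigr)$ and $V := Q\bigl(1-\tau+\tau L\bigr)$, monotonicity of $Q$ providing the separation.) Everything else is the routine verification above.
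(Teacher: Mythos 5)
Your proof is correct and follows essentially the same route as the paper: split the law of $W$ at a $(1-\tau)$-quantile, allocating the atom there fractionally between the two components, and then observe that the resulting laws are supported on opposite sides of the quantile. The only (cosmetic) differences are that you merge the paper's two cases into one formula and derive $\Ee U \ne \Ee V$ by a support/contradiction argument instead of the paper's case analysis on $\Pp(W<a)>0$ versus $\Pp(W>a)>0$.
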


Based on the argument given in \cite{CE2,CE3} extended by
Proposition~\ref{pr:MixDiffExp}, we are able to complete the family
of counterexamples to the central limit theorem for triplewise independent summands,
as specified in the following result:

\begin{theorem}
\label{th:Seq3}
For any random variable $ W $ on the real line with finite variance, which is not
almost surely constant, there exists a sequence $ X_1, X_2, \ldots $
of triplewise independent random variables, which follow the same distribution as $ W $,
such that the standardized partial sums $ S_n $ defined as in \eqref{eq:Sn} converge
in law to a probability distribution which is not normal.
\end{theorem}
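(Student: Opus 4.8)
The plan is to substitute the mixture representation from Proposition~\ref{pr:MixDiffExp} for the conditional laws supplied by Condition~\ref{cn:l-1} in the graph-based construction of Boglioni Beaulieu et al.~\cite{CE3}. Fix the integer $\ell \ge 2$ underlying their triplewise-independent (variance-gamma) construction, and apply Proposition~\ref{pr:MixDiffExp} with $\tau = \ell^{-1}$: since $W$ has finite variance and is not almost surely constant, we obtain random variables $U$ and $V$ with finite variances, with $\Ee U \ne \Ee V$, and with $\mathcal{L}(W) = (1 - \ell^{-1})\,\mathcal{L}(U) + \ell^{-1}\,\mathcal{L}(V)$. When Condition~\ref{cn:l-1} holds, the same decomposition arises with $U$ distributed as $W$ given $W \notin A$ and $V$ as $W$ given $W \in A$; the key observation is that the construction in \cite{CE3} never exploits this conditional origin, only the mixture weights, the finiteness of the variances of the two components, and the fact that their means differ.

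Next I would run the construction of \cite{CE3} verbatim with these $U$ and $V$. Each graph $G$ in their increasing sequence produces a family $(X_v)_v$ in which, roughly speaking, a combinatorial selection mechanism driven by $G$ together with some auxiliary randomness decides, for each vertex $v$, whether $X_v$ is an independent draw from $\mathcal{L}(U)$ or from $\mathcal{L}(V)$. Three points must then be verified, none of which uses that $U,V$ are conditional laws: (i) each $X_v$ has the law of $W$, which is immediate from the mixture identity~\eqref{eq:MixDiffExp}; (ii) any three of the $X_v$ are mutually independent, which is precisely the combinatorial property of the graph sequence established in \cite{CE3} and depends only on the selection mechanism (the joint law of three of the $X_v$ is a mixture over the joint type-pattern of the corresponding vertices of products of the laws $\mathcal{L}(U),\mathcal{L}(V)$, so mutual independence reduces to the three relevant selection indicators being independent Bernoulli$(\ell^{-1})$ variables); and (iii) the standardized row sums converge in law. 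For (iii) one re-runs the limit computation of \cite{CE3}: the standardized row sum splits, asymptotically, into a component determined by the selection pattern --- converging to a scaled copy of the variance-gamma variable $Z$ of \cite{CE3}, with scaling factor proportional to $\Ee V - \Ee U \ne 0$ --- plus an asymptotically independent Gaussian component produced by the within-component fluctuations, which is under control because $U$ and $V$ have finite variances.

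It then remains to see that the resulting limit law $G$ is not normal. By the previous step $G$ is the law of $c\,Z + Y$, where $c \ne 0$ since $\Ee U \ne \Ee V$, $Z$ is the non-degenerate, non-normal variance-gamma variable of \cite{CE3}, and $Y$ is an independent (possibly degenerate) Gaussian variable. Were $G$ normal, Cramér's decomposition theorem would force $c\,Z$ to be normal or degenerate, contradicting the properties of $Z$; hence $G$ is not normal. Arranging the families obtained from the successive graphs into the rows of an array and invoking Lemma~\ref{lm:FillGaps} finally extracts a single sequence $X_1, X_2, \ldots$ of triplewise independent random variables, all distributed as $W$, whose standardized partial sums $S_n$ converge in law to $G$ --- in particular, not to any normal law.

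I expect the heart of the matter to be step~(iii): one must confirm that the central-limit computation of \cite{CE3} uses only the mixture data, and that for arbitrary finite-variance components with distinct means the variance-gamma component survives with a nonzero coefficient rather than being absorbed into the Gaussian fluctuations --- which is exactly where the conclusion $\Ee U \ne \Ee V$ of Proposition~\ref{pr:MixDiffExp} is needed. By comparison, the verification of triplewise independence is imported unchanged from \cite{CE3}, and the non-normality of $G$ follows at once from Cramér's decomposition theorem.
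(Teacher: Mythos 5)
Your proposal is correct and follows essentially the same route as the paper: substitute the mixture from Proposition~\ref{pr:MixDiffExp} (with $\tau=\ell^{-1}$) for the conditional laws of Condition~\ref{cn:l-1}, observe that the argument of \cite{CE3} uses only the mixture identity and the independence of the components, obtain the limit $\sqrt{1-r^2}\,Z + r\,Y$ with $r\ne 0$ because $\Ee U\ne\Ee V$, conclude non-normality via Cram\'er's decomposition (the paper's Remark~\ref{rk:Deconv}), and pass from rows to a sequence with Lemma~\ref{lm:FillGaps} using $G_m=K_{m,m}$, for which $n_m=m^2$ gives $n_{m+1}/n_m\to 1$. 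The only slip is notational: in the construction the summands $X_k$ are indexed by the \emph{edges} of $G_m$ (with the selection indicator $D_k$ determined by agreement of the uniform labels at the two endpoints), not by the vertices, but this does not affect the argument.
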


We defer the proof to the end of Section~\ref{sc:Ada}, where we give an outline
of the arguments given in \cite{CE2,CE3}, exposing the point where
Proposition~\ref{pr:MixDiffExp} is applied. Notice that the latter is not related
to the dependence structure of the summands, which depends on a sequence of
graphs. So far, sequences leading to counterexamples for pair- and triplewise
independence have been constructed. In future, it may turn out that higher degree
of tuplewise independence can also be covered: see the discussion in Chapter~5 of
\cite{CE3}. As stated in Corollary~\ref{co:CESeq}, this would automatically extend
Theorem~\ref{th:Seq3}, preserving the generality of the distribution of the
summands.

\section{Adaptation of construction}
\label{sc:Ada}

As mentioned in the Introduction, the construction provided by
Boglioni Beaulieu et al.\ \cite{CE3} starts with a sequence of undirected graphs
$ G_1, G_2, \ldots $\spacefactor=3000{} In order to provide $ K $-tuplewise
independence, all these graphs must be of girth at least $ K + 1 $, that
is, there must be no cycles of length $ K $ or less.

For a graph $ G $, denote as usual by $ V(G) $ its vertex set and by $ E(G) $ its
edge set. We work with abstract edges, assuming that each edge is assigned its
two endpoints. In the sequence $ G_1, G_2, \ldots $, we assume that this assignment
is consistent for all graphs in it, that is, any edge $ k \in E(G_m) \cap E(G_n) $
has the same endpoints in both $ G_m $ and $ G_n $.

Following Boglioni Beaulieu et al.\ \cite{CE3} (altering the notation
to some extent), choose $ \ell \in \{ 2, 3, 4, \ldots \} $.
Define $ \mathcal V := \bigcup_{m=1}^\infty V(G_m) $ and
$ \mathcal E := \bigcup_{m=1}^\infty E(G_m) $. For each vertex $ i \in \mathcal V $, consider
a random variable $ M_i $ distributed uniformly over $ \{ 1, 2, \ldots, \ell \} $,
letting all random variables $ M_i $, $ i \in \mathcal V $, be mutually independent.
For each edge $ k \in \mathcal E $ with endpoints $ i $ and $ j $, define $ D_k := 1 $ if
$ M_i = M_j $ and $ D_k := 0 $ otherwise. Since each graph $ G_m $ has girth at least
$ K + 1 $, the family $ D_k, k \in E(G_m) $, is $ K $-tuplewise independent.
Denoting by $ n_m $ the number of edges of $ G_m $, let
\[
 \Xi^*_m := \sum_{k \in E(G_m)} D_k
 \, , \kern 1.5em
 \xi^*_m := \frac{\Xi^*_m - n_m \ell^{-1}}{\sqrt{n_m \ell^{-1} (1 - \ell^{-1})}}
 \, .
\]
Now choose two generic real-valued random variables $ U $ and $ V $ with finite
variances. Let $ W $ be a random variable with distribution being a mixture
of the distributions of $ U $ and $ V $: more precisely,
\begin{equation}
\label{eq:Mixl-1}
 \Pp(W \in C) = (1 - \ell^{-1}) \Pp(U \in C) + \ell^{-1} \Pp(V \in C)
\end{equation}
for all Borel sets $ C \subseteq \RR $. Next, for each edge $ k \in \mathcal E $, consider
random variables $ U_k $ and $ V_k $ following the same distribution as
$ U $ and $ V $, respectively. Choose the random variables $ U_k $ and
$ V_k $, $ k \in \mathcal E $, to be all mutually independent as well as
independent of the random variables $ M_i $, $ i \in \mathcal V $. Letting
\begin{equation}
\label{eq:XUVD}
 X_k := \left\{ \begin{array}{cl}
  U_k & \> ; \> D_k = 0
 \\
  V_k & \> ; \> D_k = 1
 \end{array} \right.
\end{equation}
and fixing $ m $, observe that the random variables $ X_k $, $ k \in E(G_m) $,
are $ K $-tuplewise independent and follow the same distribution as $ W $.

The constructions in the papers \cite{CE2,CE3} start with $ W $ and a Borel set
$ A \subseteq \RR $, letting $ U $ and $ V $ to follow the conditional distributions
of $ W $ given $ A $ and $ A^c $, respectively. This gives rise to Condition~\ref{cn:l-1}.
However, there is no need to choose $ U $ and $ V $ this way: all that suffices for the
continuation and desired properties of the construction, in particular Theorem~1 in
\cite{CE2} and Theorem~3.1 in \cite{CE3}, is the relationship \eqref{eq:Mixl-1}: the
latter (along with the observation that $ X_k $ follow the same distribution as
$ W $) corresponds to Formula~$(2.9)$ in \cite{CE2} and Formula~$(2.10)$ in \cite{CE3}.
Along with Formula~$(2.8)$ in \cite{CE2} and Formula~$(2.9)$ in \cite{CE3}
(which both correspond to \eqref{eq:XUVD}) and the independence properties
of the random variables $ U_k $ and $ V_k $, $ k \in E(G_m) $, this is all
that is used in the proofs of Theorem~1 in \cite{CE2} and Theorem~3.1 in \cite{CE3}.
This proves the following modification of Theorem~3.1 in \cite{CE3}:

\begin{theorem}
\label{th:xiS}
With $ U $, $ V $, $ W $, $ G_m $, $ X_k $ and $ \xi^*_m $ as above,
let $ \mu = \Ee W $ and $ \sigma^2 = \Var(W) $; assume that $ 0 < \sigma < \infty $.
Provided that there exists a random variable $ Y $, such that
\[
 \xi^*_m \xrightarrow[n \to \infty]{\mathrm{law}} Y \, ,
\]
the standardized sums
\[
 S^*_m := \frac{\sum_{k \in E(G_m)} X_k - n_m \mu}{\sigma \sqrt{n_m}}
\]
converge in law to the random variable
\[
 S^{(\ell)} := \sqrt{1 - r^2} \, Z + r \, Y \, ,
\]
where $ Z $ a standard normal random variable, independent of $ Y $, and where\newline
$ r := \sqrt{\ell^{-1}(1 - \ell^{-1})} \, (\Ee V - \Ee U)/\sigma $.
\qed
\end{theorem}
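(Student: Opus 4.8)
The plan is to split $ S^*_m $ into a term driven by the fluctuations of $ \Xi^*_m $ — hence by $ \xi^*_m $ — and a ``conditional central limit'' term, and then to identify the joint limit of the two terms, with \emph{independent} components, by conditioning on $ \mathcal{D}_m := \sigma\bigl( D_k \colon k \in E(G_m) \bigr) $; throughout, $ m \to \infty $ and $ n_m \to \infty $, as in the underlying construction. The algebraic step is this: since $ \Ee[X_k \mid D_k] $ equals $ \Ee U $ on $ \{ D_k = 0 \} $ and $ \Ee V $ on $ \{ D_k = 1 \} $, while \eqref{eq:Mixl-1} forces $ \mu = (1 - \ell^{-1}) \Ee U + \ell^{-1} \Ee V $, a short computation gives $ \Ee[X_k \mid D_k] - \mu = (\Ee V - \Ee U)(D_k - \ell^{-1}) $; summing over $ k \in E(G_m) $ and dividing by $ \sigma \sqrt{n_m} $ yields
\[
 S^*_m = T_m + r\, \xi^*_m \, , \qquad
 T_m := \frac{1}{\sigma \sqrt{n_m}} \sum_{k \in E(G_m)} \bigl( X_k - \Ee[X_k \mid D_k] \bigr) \, ,
\]
with $ r $ precisely the quantity in the statement. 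The law of total variance applied to \eqref{eq:Mixl-1} also gives $ \sigma^2 = \bar\sigma^2 + r^2 \sigma^2 $, where $ \bar\sigma^2 := (1 - \ell^{-1}) \Var U + \ell^{-1} \Var V \ge 0 $; in particular $ \bar\sigma^2 = (1 - r^2) \sigma^2 $, so $ r^2 \le 1 $ and $ \sqrt{1 - r^2} $ makes sense.

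Next I would compute the characteristic function of $ T_m $ conditionally on $ \mathcal{D}_m $. Since the variables $ U_k, V_k $, $ k \in \mathcal{E} $, are mutually independent and independent of the $ M_i $'s, hence of $ \mathcal{D}_m $, the summands of $ T_m $ are, conditionally on $ \mathcal{D}_m $, independent, distributed as $ U - \Ee U $ on $ \{ D_k = 0 \} $ and as $ V - \Ee V $ on $ \{ D_k = 1 \} $. Writing $ \phi_U(u) := \Ee[ e^{iu(U - \Ee U)} ] $ and $ \phi_V $ for the analogue, this gives, for fixed $ s \in \RR $,
\[
 \Ee\bigl[ e^{i s T_m} \mid \mathcal{D}_m \bigr]
 = \phi_U\!\Bigl( \frac{s}{\sigma \sqrt{n_m}} \Bigr)^{\,n_m - \Xi^*_m}\,
   \phi_V\!\Bigl( \frac{s}{\sigma \sqrt{n_m}} \Bigr)^{\,\Xi^*_m} \, .
\]
Using $ \phi_U(u) = 1 - \tfrac12 u^2 \Var U + o(u^2) $ and $ \phi_V(u) = 1 - \tfrac12 u^2 \Var V + o(u^2) $ as $ u \to 0 $ (this is where the finite variances of $ U $ and $ V $ enter), together with $ n_m \to \infty $ and $ \Xi^*_m / n_m = \ell^{-1} + \xi^*_m \sqrt{\ell^{-1}(1 - \ell^{-1})/n_m} \to \ell^{-1} $ in probability — the last because $ \xi^*_m $, being convergent in law, is tight — standard Taylor-expansion estimates give $ \Ee[ e^{i s T_m} \mid \mathcal{D}_m ] \to \exp\bigl( -\tfrac12 s^2 (1 - r^2) \bigr) $ in probability.

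Finally I would assemble the limit. As $ \xi^*_m $ is $ \mathcal{D}_m $-measurable, for $ s, t \in \RR $
\[
 \Ee\bigl[ e^{i s T_m + i t \xi^*_m} \bigr]
 = \Ee\Bigl[ e^{i t \xi^*_m}\, \Ee\bigl[ e^{i s T_m} \mid \mathcal{D}_m \bigr] \Bigr]
 \longrightarrow e^{-\frac12 s^2 (1 - r^2)}\, \Ee[ e^{i t Y} ] \, ,
\]
the convergence following from the bounded convergence theorem — the conditional characteristic function has modulus $ \le 1 $ and converges in probability to the displayed constant — together with $ \Ee[ e^{i t \xi^*_m} ] \to \Ee[ e^{i t Y} ] $, which holds because $ \xi^*_m \to Y $ in law. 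The right-hand side is the joint characteristic function of $ \bigl( \sqrt{1 - r^2}\, Z,\, Y \bigr) $, where $ Z $ is standard normal and independent of $ Y $, so $ (T_m, \xi^*_m) \xrightarrow{\mathrm{law}} \bigl( \sqrt{1 - r^2}\, Z,\, Y \bigr) $ by the Lévy continuity theorem; applying the continuous mapping theorem to $ (a, b) \mapsto a + r b $ gives $ S^*_m = T_m + r \xi^*_m \xrightarrow{\mathrm{law}} \sqrt{1 - r^2}\, Z + r Y = S^{(\ell)} $.

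The main obstacle is this last step: for each fixed $ m $, $ T_m $ and $ \xi^*_m $ are strongly dependent, both being functions of the same $ M_i $'s, yet in the limit they must be independent. What makes it work is that, conditionally on $ \mathcal{D}_m $, the characteristic function of $ T_m $ converges to a \emph{non-random} constant — precisely because $ \Xi^*_m / n_m \to \ell^{-1} $ deterministically — so the Gaussian component decouples from $ \xi^*_m $ and factors out of the expectation. A secondary nuisance is the random, $ \mathcal{D}_m $-measurable split of $ E(G_m) $ into ``$ U $-edges'' and ``$ V $-edges'', which is why I would argue through conditional characteristic functions and Taylor expansions of $ \phi_U, \phi_V $ rather than through an off-the-shelf Lindeberg array central limit theorem.
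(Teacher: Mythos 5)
Your argument is correct and self-contained; the decomposition $ S^*_m = T_m + r\,\xi^*_m $, the conditional characteristic-function computation given the $ D_k $'s, and the decoupling that comes from the conditional limit being a non-random constant are precisely the mechanism of the proofs of Theorem~1 in \cite{CE2} and Theorem~3.1 in \cite{CE3}, to which the paper defers (the paper itself supplies no independent proof of Theorem~\ref{th:xiS}; it only observes that those proofs use nothing beyond \eqref{eq:Mixl-1}, \eqref{eq:XUVD} and the stated independence properties, so they carry over verbatim). In particular, you have correctly isolated the key point of the paper, namely that only the mixture identity \eqref{eq:Mixl-1} --- and not Condition~\ref{cn:l-1} --- is needed, and your law-of-total-variance identity $ \bar\sigma^2 = (1 - r^2)\sigma^2 $ cleanly explains why $ r^2 \le 1 $ and why the Gaussian component has the stated variance.
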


\begin{remark}
\label{rk:Deconv}
If $ r > 0 $, then $ S^{(\ell)} $ is normal if and only if $ Y $ is normal.
\end{remark}

However, by Proposition~\ref{pr:MixDiffExp}, each real-valued random variable $ W $
with finite variance, which is not almost surely constant, admits random variables
$ U $ and $ V $ with finite variances and different expectations, such that
\eqref{eq:Mixl-1} is satisfied for all Borel sets $ C \subseteq \RR $. Recalling
Remark~\ref{rk:Deconv}, we have now proved the following assertion.

\begin{corollary}
\label{co:CEArr}
Let $ W $ be a real-valued random variable with expectation $ \mu $ and
variance $ \sigma^2 $; assume that $ 0 < \sigma < \infty $. Let $ G_m $,
$ n_m $ and $ \xi^*_m $, $ m = 1, 2, 3, \ldots $, be as above. Suppose that all graphs
$ G_m $ have girth at least $ K + 1 $ and that the random variables
$ \xi^*_m $ converge in law to a probability distribution which is not normal.
Then there exist random variables $ X_k $, $ k \in \mathcal E $, with the following
properties:
\begin{list}{\textlegacybullet}{\topsep 1ex \parsep 0ex \itemsep 1ex\labelwidth \leftmargini}
\item For each $ k \in \mathcal E $, $ X_k $ has the same distribution as $ W $.
\item For each $ m = 1, 2, 3, \ldots $, the family $ X_k $, $ k \in E(G_m) $,
is $ K $-tuplewise independent.
\item The standardized sums $ S^*_m = \frac{\sum_{k \in E(G_m)} X_k - n_m \mu}{\sigma \sqrt{n_m}} $
converge in law to a probability distribution which is not normal.
\end{list}
\qed
\end{corollary}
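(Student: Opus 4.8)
The plan is to feed Proposition~\ref{pr:MixDiffExp} into the construction recalled just above and then to invoke Theorem~\ref{th:xiS} together with Remark~\ref{rk:Deconv}. Since $W$ has finite, nonzero variance and is not almost surely constant, Proposition~\ref{pr:MixDiffExp} applied with $\tau = \ell^{-1}$ (the $\ell$ being the one underlying the given $\xi^*_m$) provides real-valued random variables $U$ and $V$ with finite variances and $\Ee U \ne \Ee V$ for which \eqref{eq:Mixl-1} holds for every Borel set $C \subseteq \RR$. With this $U$ and $V$ I would run the construction verbatim: let the $M_i$, $i \in \mathcal V$, be mutually independent and uniform on $\{1,\dots,\ell\}$; for the edge $k$ with endpoints $i,j$ put $D_k := 1$ if $M_i = M_j$ and $D_k := 0$ otherwise; take $U_k \sim U$ and $V_k \sim V$ for $k \in \mathcal E$, all mutually independent and independent of every $M_i$; and define $X_k$ by \eqref{eq:XUVD}. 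This furnishes the candidate family $X_k$, $k \in \mathcal E$.

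It then remains to verify the three asserted properties. For the first, conditioning on $D_k$ and using $\Pp(D_k = 1) = \Pp(M_i = M_j) = \ell^{-1}$ together with \eqref{eq:Mixl-1} gives $\Pp(X_k \in C) = (1 - \ell^{-1})\Pp(U \in C) + \ell^{-1}\Pp(V \in C) = \Pp(W \in C)$, so each $X_k$ has the law of $W$. For the second, fix $m$: since $G_m$ has girth at least $K+1$, the family $\{D_k : k \in E(G_m)\}$ is $K$-tuplewise independent, as recorded in the construction; and since the $U_k, V_k$ are mutually independent and independent of every $M_i$, hence of every $D_k$, the triples $(D_k, U_k, V_k)$ for any $K$ distinct edges $k$ of $G_m$ are mutually independent, so the corresponding $X_k$ are mutually independent as well, which is exactly $K$-tuplewise independence of $\{X_k : k \in E(G_m)\}$. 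For the third, let $Y$ be a random variable whose law is the (non-normal) limit of the $\xi^*_m$; Theorem~\ref{th:xiS} then applies and shows that $S^*_m$ converges in law to $S^{(\ell)} = \sqrt{1 - r^2}\,Z + r\,Y$, with $r = \sqrt{\ell^{-1}(1 - \ell^{-1})}\,(\Ee V - \Ee U)/\sigma$. Because $\Ee U \ne \Ee V$, we have $r \ne 0$, so by Remark~\ref{rk:Deconv} (applied, if $r < 0$, to $-S^{(\ell)}$, which has the law of $\sqrt{1 - r^2}\,Z + |r|\,Y$) the limit $S^{(\ell)}$ is normal if and only if $Y$ is; as $Y$ is not normal, neither is the limit of $S^*_m$.

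Assuming Proposition~\ref{pr:MixDiffExp}, there is essentially no obstacle left: the argument is assembly. If one must name the delicate point, it is that Remark~\ref{rk:Deconv} requires $r \ne 0$, which is exactly why the ``different expectations'' clause of Proposition~\ref{pr:MixDiffExp} is indispensable and why Condition~\ref{cn:l-1} was imposed in the earlier works; everything else --- the identity $X_k \sim W$ from \eqref{eq:Mixl-1}, the transfer of $K$-tuplewise independence from the $D_k$ to the $X_k$, and the distributional convergence --- is routine. It is worth recording in passing that $r^2 \le 1$ automatically, since $\sigma^2 = (1 - \ell^{-1})\Var U + \ell^{-1}\Var V + \ell^{-1}(1 - \ell^{-1})(\Ee V - \Ee U)^2 \ge \ell^{-1}(1 - \ell^{-1})(\Ee V - \Ee U)^2$, so that $S^{(\ell)}$ is a genuine real random variable and no further consistency check is needed.
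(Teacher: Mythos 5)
Your proof is correct and follows essentially the same route as the paper: apply Proposition~\ref{pr:MixDiffExp} with $\tau = \ell^{-1}$ to obtain $U$ and $V$ with different expectations satisfying \eqref{eq:Mixl-1}, run the construction, and conclude via Theorem~\ref{th:xiS} and Remark~\ref{rk:Deconv}. Your extra care about the sign of $r$ (the remark is stated only for $r>0$) and the check that $r^2 \le 1$ are sensible additions that the paper leaves implicit, but they do not change the argument.
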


The preceding assertion allows us to construct counterexamples to the central limit
theorem in terms of \emph{arrays} of random variables, each graph giving one row.
On the other hand, the central limit theorem is originally formulated in terms of
\emph{sequences}. The latter can also be constructed if the grapgs $ G_m $ form an
increasing sequence in the sense that
$ V(G_1) \subseteq V(G_2) \subseteq \cdots $, $ E(G_1) \subseteq E(G_2) \subseteq \cdots $
and for each $ m $, $ E(G_m) $ is exactly the set of all edges in $ E(G_{m+1}) $
with both endpoints in $ V(G_m) $. Notice that this allows us to define the endpoints
of each edge $ k \in \mathcal E $ consistently. All examples given in \cite{CE3} are of
this kind.

Following Boglioni Beaulieu et al.\ \cite{CE3}, arrange the edge set $ \mathcal E $ into
a sequence, so that the elements of $ E(G_1) $ come first, followed by the elements of
$ E(G_2) \setminus E(G_1) $, then by $ E(G_3) \bigm\backslash \bigl( E(G_1) \cup E(G_2) \bigr) $
and so on; otherwise, the order does not matter. Without loss of generality, we can just assume
that $ E(G_1) = \{ 1, 2, \ldots, n_1 \} $ and $ E(G_m) \bigm\backslash \bigl(
E(G_1) \cup \cdots \cup E(G_{m-1}) \bigr) = \{ n_{m-1} + 1, n_{m-1} + 2, \ldots, n_m \} $
for $ m = 2, 3, 4, \ldots $\spacefactor=3000{}
Thus, we have obtained a sequence of random variables $ X_1, X_2, X_2, \ldots $, which are
$ K $-tuplewise independent provided that each graph $ G_m $ has girth at least $ K + 1 $.
Letting $ S_n $ be as in \eqref{eq:Sn}, notice that $ S^*_m = S_{n_m} $. Therefore, under the
conditions of Corollary~\ref{co:CEArr}, the sequence $ S_1, S_2, \ldots $ has a subsequence
which converges in law to a non-normal distribution. Hence the sequence $ S_1, S_2, \ldots $
does not converge to a normal distribution. This is what is proved in \cite{CE3}
(under Condition~\ref{cn:l-1}).

However, under some additional conditions, one can do a bit more, showing that the whole
sequence of standardized partial sums actually converges in law.

\begin{lemma}
\label{lm:FillGaps}
Let $ V_1, V_2, \ldots $ be uncorrelated zero-mean random variables with the same
variance $ \sigma^2 $, where $ 0 < \sigma < \infty $. Let
\[
 T_n := \frac{1}{\sigma \sqrt{n}} \sum_{k=1}^n V_k
\]
be their standardized partial sums. Take a sequence $ n_1 < n_2 < \ldots $ of natural
numbers with $ \lim_{m \to \infty} n_{m+1}/n_m = 1 $. Then, if the subsequence
$ T_{n_1}, T_{n_2}, \ldots $ converges in law to a random variable $ T $, the same
is true for the whole sequence $ T_1, T_2, \ldots $
\end{lemma}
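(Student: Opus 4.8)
The plan is to sandwich a general $ T_n $ between consecutive terms of the given subsequence and show that the discrepancy vanishes. For $ n $ large, let $ m = m(n) $ be determined by $ n_m \le n < n_{m+1} $; since the $ n_m $ are strictly increasing, $ m(n) \to \infty $ as $ n \to \infty $. Write
\[
 T_n = c_n\, T_{n_m} + R_n \,, \qquad c_n := \sqrt{\tfrac{n_m}{n}} \,, \qquad R_n := \frac{1}{\sigma\sqrt n}\sum_{k=n_m+1}^{n} V_k \,.
\]
First I would extract from the hypotheses the two elementary facts I need. Because the $ V_k $ are uncorrelated, zero-mean, with common variance $ \sigma^2 $, one has $ \Ee T_j^2 = 1 $ for every $ j $ and $ \Ee R_n^2 = (n - n_m)/n $. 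Using $ n < n_{m+1} $ and $ n \ge n_m $,
\[
 \Ee R_n^2 = \frac{n - n_m}{n} \le \frac{n_{m+1} - n_m}{n_m} = \frac{n_{m+1}}{n_m} - 1 \xrightarrow[n\to\infty]{} 0 \,,
\]
by the assumption $ n_{m+1}/n_m \to 1 $; the same assumption gives $ 1 \le n/n_m < n_{m+1}/n_m \to 1 $, hence $ c_n \to 1 $.

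Then I would finish with a Slutsky-type estimate against bounded Lipschitz test functions (which suffice to detect convergence in law). Fix a bounded Lipschitz $ f $. Writing
\[
 \Ee f(T_n) - \Ee f(T_{n_m}) = \Bigl(\Ee f(c_n T_{n_m} + R_n) - \Ee f(c_n T_{n_m})\Bigr) + \Bigl(\Ee f(c_n T_{n_m}) - \Ee f(T_{n_m})\Bigr) \,,
\]
the first bracket is bounded in absolute value by $ \mathrm{Lip}(f)\,\Ee|R_n| \le \mathrm{Lip}(f)\,\sqrt{\Ee R_n^2} \to 0 $, and the second by $ \mathrm{Lip}(f)\,|c_n - 1|\,\Ee|T_{n_m}| \le \mathrm{Lip}(f)\,|c_n - 1| \to 0 $, where I use $ \Ee|T_{n_m}| \le \sqrt{\Ee T_{n_m}^2} = 1 $. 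Since $ T_{n_m} \to T $ in law and $ m(n) \to \infty $, also $ \Ee f(T_{n_m}) \to \Ee f(T) $; combining the three displays yields $ \Ee f(T_n) \to \Ee f(T) $, which is the claim.

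The argument is routine; the only points requiring care are the bookkeeping that converts $ n_{m+1}/n_m \to 1 $ into $ \Ee R_n^2 \to 0 $ and $ c_n \to 1 $, and the harmless-looking replacement of $ c_n T_{n_m} $ by $ T_{n_m} $, which is legitimate here precisely because the equal-variance hypothesis supplies the uniform bound $ \sup_j \Ee|T_j| \le 1 $ (equivalently, $ (T_{n_m}) $ is tight, being $ L^2 $-bounded). I do not expect any genuine obstacle beyond this.
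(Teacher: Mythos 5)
Your proof is correct and follows essentially the same route as the paper: split $T_n$ at the nearest subsequence index $n_m\le n$, show the leftover block has variance $(n-n_m)/n\to 0$ and the rescaling factor $\sqrt{n_m/n}\to 1$, and conclude. The only difference is cosmetic: where the paper invokes Chebyshev plus two applications of Slutsky's theorem, you carry out the same steps by hand against bounded Lipschitz test functions, using the uniform $L^2$ bound $\Ee T_j^2=1$ to absorb the factor $c_n$.
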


\begin{proof}
Letting $ N_n := n_m $ for $ n_m \le n < n_{m+1} $, we find that the sequence
\[
 \frac{1}{\sigma \sqrt{N_n}} \sum_{k=1}^{N_n} V_k \> ; \quad n = 1, 2, 3, \ldots
\]
converges in law to $ T $. Notice that the assumed condition implies that
$ \lim_{n \to \infty} n/N_n = 1 $. Now consider the sequence
\[
 \frac{1}{\sigma \sqrt{N_n}} \sum_{k=N_n+1}^{n} V_k \> ; \quad n = 1, 2, 3, \ldots
\]
and observe that
$ \Var \bigl( \frac{1}{\sigma \sqrt{N_n}} \sum_{k=N_n+1}^{n} V_k \bigr) = \frac{n - N_n}{N_n} $
tends to zero as $ n \to \infty $. By Chebyshev's inequality, the random variables
$ \frac{1}{\sigma \sqrt{N_n}} \sum_{k=N_n+1}^{n} V_k $ then converge in law to zero
as $ n \to \infty $. By Slutsky's theorem, the sequence
$ \frac{1}{\sigma \sqrt{N_n}} \sum_{k=1}^n V_k $ then converges in law to $ T $.
The rest is completed by another part of Slutsky's theorem, recalling that
$ \lim_{n \to \infty} n/N_n = 1 $.
\end{proof}

We can now summarize our observations into the following assertion:

\begin{corollary}
\label{co:CESeq}
Let $ W $ be a real-valued random variable with expectation $ \mu $ and
variance $ \sigma^2 $; assume that $ 0 < \sigma < \infty $. Let $ G_m $,
$ n_m $ and $ \xi^*_m $, $ m = 1, 2, 3, \ldots $, be as above. Suppose that all graphs
$ G_m $ have girth at least $ K + 1 $ and that the random variables
$ \xi^*_m $ converge in law to a probability distribution which is not normal.
Then there exist $ K $-tuplewise independent random variables
$ X_1, X_2, X_3, \ldots $, each of them following the same distribution as $ W $,
such that their standardized partial sums $ S_n $ defined as in \eqref{eq:Sn}
do not converge to a normal distribution. If, in addition, $ n_1 < n_2 < \ldots $
and $ \lim_{m \to \infty} n_{m+1}/n_m = 1 $, then the random variables $ S_n $
converge in law to a probability distribution which is not normal.\qed
\end{corollary}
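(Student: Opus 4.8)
The plan is to assemble the statement from the array-level result Corollary~\ref{co:CEArr}, the device (described just above) for enumerating $ \mathcal E $ so that each $ E(G_m) $ becomes an initial segment, and Lemma~\ref{lm:FillGaps}; nothing beyond these ingredients is needed.

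First I would apply Corollary~\ref{co:CEArr} to the given $ W $ and the graphs $ G_m $, obtaining random variables $ X_k $, $ k \in \mathcal E $, such that each $ X_k $ has the distribution of $ W $, each family $ \{ X_k : k \in E(G_m) \} $ is $ K $-tuplewise independent, and the standardized sums $ S^*_m $ converge in law to some distribution $ \mathcal L $ which is not normal. Next, using that the graphs $ G_m $ form an increasing sequence (as in the construction preceding Lemma~\ref{lm:FillGaps}), I would relabel the edges as explained there, so that $ E(G_m) = \{ 1, \ldots, n_m \} $ for every $ m $; this turns $ \{ X_k \}_{k \in \mathcal E} $ into a sequence $ X_1, X_2, \ldots $ Any $ K $ distinct indices $ i_1, \ldots, i_K $ lie in $ E(G_m) $ for $ m $ large enough, by the nesting $ E(G_1) \subseteq E(G_2) \subseteq \cdots $, hence are mutually independent; so the whole sequence is $ K $-tuplewise independent, and each $ X_k $ is still distributed as $ W $. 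Since $ S^*_m = S_{n_m} $ with $ S_n $ as in \eqref{eq:Sn}, the subsequence $ S_{n_1}, S_{n_2}, \ldots $ converges in law to $ \mathcal L $; if $ S_n $ converged in law to a normal distribution, so would every subsequence, forcing $ \mathcal L $ to be normal, a contradiction. This proves the first assertion.

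For the second assertion, assume in addition $ n_1 < n_2 < \cdots $ and $ n_{m+1}/n_m \to 1 $. Put $ V_k := X_k - \mu $, so that $ T_n := \frac{1}{\sigma \sqrt n} \sum_{k=1}^n V_k $ equals $ S_n $. The $ V_k $ have mean $ 0 $ and common variance $ \sigma^2 \in (0, \infty) $; moreover any two of them, say $ V_j $ and $ V_k $ with $ j \ne k $, are uncorrelated, because $ \{ j, k \} \subseteq E(G_m) $ for $ m $ large and the family over $ E(G_m) $ is $ K $-tuplewise (in particular, since $ K \ge 2 $, pairwise) independent. Thus the hypotheses of Lemma~\ref{lm:FillGaps} are met, and since $ T_{n_m} = S_{n_m} = S^*_m $ converges in law to $ \mathcal L $, the lemma yields that $ T_n = S_n $ converges in law to $ \mathcal L $, a distribution which is not normal.

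The argument is essentially bookkeeping, so I do not expect a genuine obstacle. The two points that do require a line of justification are that $ K $-tuplewise independence of the individual rows transfers to the single sequence — which relies precisely on the nesting of the edge sets, so that every finite set of indices is eventually contained in one $ E(G_m) $ — and that the three hypotheses of Lemma~\ref{lm:FillGaps} (zero mean, equal finite variances, pairwise uncorrelatedness) hold, the last of these again using pairwise independence within rows.
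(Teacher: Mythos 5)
Your proof is correct and follows essentially the same route as the paper: apply Corollary~\ref{co:CEArr}, use the nesting of the edge sets to turn the array into a single $K$-tuplewise independent sequence with $S^*_m = S_{n_m}$, and invoke Lemma~\ref{lm:FillGaps} (with pairwise uncorrelatedness coming from pairwise independence) for the convergence of the full sequence.
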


With the preceding assertion, we are in a position to prove Theorem~\ref{th:Seq3}.

\begin{proof}[Proof of Theorem~\ref{th:Seq3}]
Following Boglioni Beaulieu et al.\ \cite{CE3}, choose $ G_m := K_{m,m} $;
as usual, $ K_{m,m} $ denotes the bipartite graph with vertices
divided into two groups of $ m $ vertices, where two vertices are adjacent if and
only if they belong to different groups. Notice that $ K_{m,m} $ has $ m^2 $ edges and
girth 4 for $ m \ge 2 $. Choosing any $ \ell \in \{ 2, 3, 4, \ldots \} $,
Theorem~4.1 in \cite{CE3} shows that the underlying random variables $ \xi^*_m $
defined as above converge in law to a variance gamma distribution,
which is not normal. Thus, the conditions of Corollary~\ref{co:CESeq} are fulfilled,
proving the result.
\end{proof}

\section{Construction of mixture}
\label{sc:Mix}

It remains to prove Proposition~\ref{pr:MixDiffExp}, which claims that any suitable
probability distribution on the real line can be represented as a suitable mixture
of two distributions.

\begin{proof}[Proof of Proposition~\ref{pr:MixDiffExp}]
Let
$ a := \sup \{ w \in \RR \sth \Pp(W < w) < 1 - \tau \} $
and
$ b := \inf \{ w \in \RR \sth \Pp(W > w) < \tau \} $
be the lower and upper $ (1 - \tau) $-quantile of the random variable $ W $.
Clearly, $ a \le b $, $ \Pp(W < a) \le 1 - \tau \le \Pp(W \le a) $ and
$ \Pp(W > b) \le \tau \le \Pp(W \ge b) $. We now distinguish two cases.

First, if $ a < b $ or $ \Pp(a \le W \le b) = 0 $, then $ \Pp(W \le a) = 1 - \tau $ and
$ \Pp(W \ge b) = \tau $. In this case, the construction is exactly the same as in
\cite{CE2,CE3}: choosing $ U $ and $ V $ to follow the conditional distribution of $ W $ given
$ W \le a $ and $ W \ge b $, respectively, \eqref{eq:MixDiffExp} is immediate.
Moreover, $ \Ee U = \Ee(W \mid W \le a) \le a < b = \Ee(W \mid W \ge b) = \Ee V $.
Finally, if $ W $ has finite variance, that is, if $ \Ee(W^2) < \infty $,
then $ \Ee(U^2) = \Ee(W^2 \mid W \le a) $ and $ \Ee(V^2) = \Ee(W^2 \mid W \ge b) $
are finite, too.

It remains to consider the case where $ a = b $ and $ \Pp(W = a) > 0 $.
Then define the distributions of $ U $ and $ V $ by
\[
 \Pp(U \in C)
 =
 \frac{1}{1 - \tau} \left( \Pp(W \in C, W < a) + \frac{1 - \tau - \Pp(W < a)}{\Pp(W = a)} \, \Pp(W \in C, W = a) \right)
\]
and
\[
 \Pp(V \in C)
 =
 \frac{1}{\tau} \left( \Pp(W \in C, W > a) + \frac{\tau - \Pp(W > a)}{\Pp(W = a)} \, \Pp(W \in C, W = a) \right)
 \, .
\]
A brief calculation shows that the latter two formulas indeed define
probability distributions and that \eqref{eq:MixDiffExp} is fulfilled. Next, we show
that we again have $ \Ee U < \Ee V $. First, observe that both expectations exist with
\[
 \Ee U
 =
 \frac{\Ee \bigl[ W \One(W < a) \bigr] + a \bigl( 1 - \tau - \Pp(W < a) \bigr)}{1 - \tau}
\]
and
\[
 \Ee V
 =
 \frac{\Ee \bigl[ W \One(W > a) \bigr] + a \bigl( \tau - \Pp(W < a) \bigr)}{\tau}
 \, .
\]
Now if $ \Pp(W < a) > 0 $, then $ \Ee(W \mid W < a) < a $. A brief calculation shows that
$ \Ee U < a $ in this case. Similarly, if $ \Pp(W > a) > 0 $, then $ \Ee V > a $.
Since $ W $ is not almost surely constant, at least one of these two cases occurs.
Noting that $ \Ee U \le a \le \Ee V $, we conclude that $ \Ee U < \Ee V $.

Finally, observe that if $ W $ has finite variance, that is, if $ \Ee(W^2) < \infty $,
we also have
\[
 \Ee(U^2)
 =
 \frac{\Ee \bigl[ W^2 \One(W < a) \bigr] + a^2 \bigl( 1 - \tau - \Pp(W < a) \bigr)}{1 - \tau}
 <
 \infty
\]
and
\[
 \Ee(V^2)
 =
 \frac{\Ee \bigl[ W^2 \One(W > a) \bigr] + a^2 \bigl( \tau - \Pp(W < a) \bigr)}{\tau}
 <
 \infty
\]
and the proof is complete.
\end{proof}

\bigskip

\ACKNOS{%
  The author acknowledges financial support from the ARIS (Slovenian Research and
  Innovation Agency, research core funding No.~P1--0448). In addition, the author
  would like to thank Roman Drnovšek for reading the text and valuable suggestions.%
}

\bibliographystyle{siam}
\bibliography{TupleInd}    

\end{document}